\newtheorem{theorem}{Theorem}[section]
\newtheorem{definition}[theorem]{Definition}
\newtheorem{corollary}[theorem]{Corollary}
\newtheorem{lemma}[theorem]{Lemma}
\newtheorem{example}[theorem]{Example}
\theoremstyle{remark}
\newtheorem{remark}[theorem]{Remark}
\def\<{\langle}
\def\>{\rangle}
\begin{document}
\title{\bf{The Fenchel-type inequality in the 3-dimensional Lorentz space and a Crofton formula}}

\author{Nan Ye%
  \thanks{School of Mathematical Sciences, Peking University, Beijing 100871, People's Republic of China. \texttt{yen@pku.edu.cn}. }
\and Xiang Ma%
  \thanks{LMAM, School of Mathematical Sciences, Peking University,
 Beijing 100871, People's Republic of China. \texttt{maxiang@math.pku.edu.cn}, Fax:+86-010-62751801. Corresponding author. Supported by NSFC 11471021}
  \and Donghao Wang%
  \thanks{School of Mathematical Sciences, Peking University, Beijing 100871, People's Republic of China.  \texttt{wdh\_nick\_123456@126.com}.}   }

\date{\today}
\maketitle

\begin{center}
{\bf Abstract}
\end{center}

We generalize the Fenchel theorem to strong spacelike (which means that the tangent vector and the curvature vector span a spacelike 2-plane at each point) closed curves with index 1 in the 3-dimensional Lorentz space, showing that the total curvatures must be less than or equal to $2\pi$. A similar generalization of the Fary-Milnor theorem is also obtained. We establish the Crofton formula on the de Sitter 2-sphere which implies the above results.

\hspace{2mm}

{\bf Keywords:}  strong-spacelike curves, Fenchel theorem, Fary-Milnor theorem, total curvature, Crofton formula \\

{\bf MSC(2000):\hspace{2mm} 52A40, 53C50, 53C65, }

\section{Introduction}

The total curvature of closed space curves (and submanifolds) is a classical topic in global differential geometry and topology. The Fenchel theorem \cite{Fenchel} says that in $\mathbb{R}^3$ there is always $\int k\mathrm{d}s\geq 2\pi$, and equality is attained exactly for convex plane curves.
The Fary-Milnor theorem \cite{Milnor} says that for nontrivial knot the lower bound is $4\pi$. (See also the appendix of \cite{Chern} for an enlightening discussion.)

The main purpose of this paper is to generalize these beautiful results to the 3-dimensional Lorentz space $\mathbb{R}^3_1$. For that task, it is crucial to find the appropriate assumptions and correct upper/lower bounds beforehand. We clarify these issues by making several observations as below.\\

\textbf{(1) The strong spacelike condition.}~~First, we would like the curve to have a moving frame at every point of the same type, so that not only there exist closed examples, but also one can define the length and the total curvature in a consistent way. A basic fact is that in $\mathbb{R}^3_1$, there exist neither closed timelike curve, nor closed spacelike curves with timelike normals (see Section~2 for definitions of these terms and Remark~\ref{rem-Ntimelike}). So it is natural to consider only the curves whose tangent vector $T$ and curvature vector $\kappa N$ span a spacelike 2-plane at each point.
We call this class as \emph{strong spacelike curves}, because they are not only spacelike curves, but also with spacelike osculating planes. In particular, we do not allow inflection points where the curvature vanishes. (So we can assume that the curvature satisfies $\kappa>0$.)\\

\textbf{(2) The index $I$: a well-defined global invariant.}~~ Different from the Euclidean 3-space, there is a well-defined notion of \emph{index} for closed spacelike curves in the Lorentz 3-space. This is the winding number of the tangent indicatrix (the image of the unit tangent vector $T$) around the de Sitter sphere
$\mathbb{S}^2_1=\{X\in \mathbb{R}^3_1:\langle X,X\rangle=1\}$ (the usual one-sheet hyperboloid, which is homotopy equivalent to a circle). It is integer-valued and always assumed to be positive. In the special case that $I=1$, the closed curve winds around some timelike axis exactly for one cycle; intuitively its shape is just a mild perturbation of a convex plane curve, whose projection to any spacelike plane is still convex (see Remark~\ref{rem-convex}).\\

\textbf{(3) The reversed inequality. }~~When we estimates the total curvature in this Lorentz setup and compare with plane curves, one should expect a reversed inequality in contrast with the Euclidean case. One reason is that the total curvature is equal to the length of the tangent indicatrix $T(S^1)$. A normal variation of $T(S^1)$ in $\mathbb{S}^2_1$ is always along a timelike direction, which decreases $L(T(S^1))$ for a spacelike geodesic. (A line of altitude generally has length greater than $2\pi$. But it is located on one side of the equator and is contained in a half space; hence it can not integrate to be a closed curve in $\mathbb{R}^3_1$.)\\

Now we can state the first of our main theorems as below.

\begin{theorem}[The Fenchel theorem in $\mathbb{R}^3_1$]
\label{thm-fenchel}
Let $\gamma$ be a closed strong spacelike curve in $\mathbb{R}^3_1$ with index $1$. Then its total curvature $\int_\gamma k\mathrm{d}s\leq2\pi$. The equality holds if and only if it's a convex curve on a spacelike plane.
\end{theorem}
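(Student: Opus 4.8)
The plan is to follow the template of the classical Euclidean Fenchel proof, but to exploit the \emph{reversal} of the Crofton density forced by the indefinite metric. First I would pass from $\gamma$ to its tangent indicatrix. Since $\gamma$ is strong spacelike with $\kappa>0$, after reparametrising by arclength the unit tangent $T=\gamma'$ is a spacelike point of $\mathbb{S}^2_1$ and $T'=\kappa N$ is spacelike, so $\Gamma:=T(S^1)$ is a closed spacelike curve on $\mathbb{S}^2_1$ whose length satisfies $L(\Gamma)=\int_\gamma\kappa\,\mathrm{d}s$, the total curvature, and whose winding number around the neck of $\mathbb{S}^2_1$ is the index, namely $1$. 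Thus the theorem becomes the assertion that an index-$1$ closed spacelike curve on the de Sitter sphere has length at most $2\pi$, with equality only for a spacelike great circle.

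The decisive structural input is the closure of $\gamma$, namely $\oint_\gamma T\,\mathrm{d}s=0$ in $\mathbb{R}^3_1$. Pairing with an arbitrary $u\neq 0$ gives $\oint\langle T,u\rangle\,\mathrm{d}s=0$, so $\langle T,u\rangle$ either vanishes identically or changes sign; equivalently the origin lies in the relative interior of the convex hull of $\Gamma$, and $\Gamma$ meets \emph{every} great circle $C_u=\{X\in\mathbb{S}^2_1:\langle X,u\rangle=0\}$ in at least two points. This is the exact Lorentzian analogue of the Euclidean fact that the tangent indicatrix lies in no open hemisphere. Next I would invoke the Crofton formula on $\mathbb{S}^2_1$ to be established below, which expresses $L(\Gamma)$ as the integral, over the space $\mathcal{G}$ of geodesics against its $O(2,1)$-invariant kinematic measure $\mathrm{d}\mu$, of the intersection number $\#(\Gamma\cap C)$. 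Here the signature is essential: because a normal variation of a spacelike curve in $\mathbb{S}^2_1$ is timelike and \emph{shortens} spacelike geodesics (Observation~(3)), the Crofton density is arranged so that the formula yields an upper rather than a lower estimate once the intersection number is controlled.

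It then remains to let the index pin the count. Index $1$ forces $\Gamma$ to wind once, and by the remark that an index-$1$ curve projects to a convex plane curve the winding is monotone, so for the contributing family of geodesics the geometric intersection number equals its topological minimum $2$; combined with the convex-hull conclusion (which already gives $\#\geq 2$) one obtains $\#\equiv 2$ for $\mu$-almost every contributing geodesic, and the invariant measure of that family evaluates to the constant that normalises the bound, giving $L(\Gamma)\leq 2\pi$. For equality I would argue that $L(\Gamma)=2\pi$ forces $\#(\Gamma\cap C)=2$ with no excess for almost all contributing $C$, whence the rigidity of the Crofton bound makes $\Gamma$ a spacelike great circle; pulling back, $\gamma$ lies in a spacelike plane and is convex there. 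The main obstacle is the Crofton step itself: fixing the correct space of geodesics and invariant kinematic measure on the \emph{Lorentzian} de Sitter sphere, proving finiteness of the relevant integral even though $\Gamma$ meets every great circle, and—most delicately—getting the direction of the resulting estimate right, so that the indefinite metric turns the classical lower bound into the upper bound $2\pi$. A secondary difficulty is bounding the intersection number by the index and extracting enough embeddedness of $\Gamma$ from index $1$ to run the equality discussion.
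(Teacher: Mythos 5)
Your first two steps coincide exactly with the paper's: pass to the tangent indicatrix $\Gamma=T(S^1)\subset\mathbb{S}^2_1$, whose length is the total curvature and whose index is $1$, and use closedness $\oint_\gamma T\,\mathrm{d}s=0$ to conclude that $\Gamma$ meets every spacelike great circle $Y^\perp$ in at least two points. But the Crofton step, as you propose it, cannot work in the stated form. You want a formula expressing $L(\Gamma)$ as the integral of the intersection number $n(Y^\perp)$ over the space of geodesics against the invariant measure. That integral is always $+\infty$: the moduli space of oriented spacelike geodesics of $\mathbb{S}^2_1$ is the hyperbolic plane $\mathbb{H}^2$, which has infinite invariant area, and you have just shown $n(Y^\perp)\geq 2$ for \emph{every} $Y\in\mathbb{H}^2$. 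No choice of kinematic density repairs this; the issue is not "proving finiteness" but renormalizing. The paper's resolution (Lemma~\ref{lem-2I} and Theorem~\ref{thm-global}) is that $n(Y^\perp)$ is identically $2I$ once the pole $Y$ is close enough to $\partial\mathbb{H}^2$ (or $Y$ is spacelike/lightlike), so $n-2I$ has compact support, and one proves the renormalized identity
\begin{equation*}
L(\Gamma)-2I\pi=-\frac{1}{2}\int_{\mathbb{H}^2}\bigl(n(Y^\perp)-2I\bigr)\,\mathrm{d}Y
\end{equation*}
by first computing $\int_{\mathbb{H}^2_R}n(Y^\perp)\,\mathrm{d}Y=4I\pi\cosh R-2L(\Gamma)$ on a compact disk and subtracting $2I$ times its area $2\pi(\cosh R-1)$. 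With this identity the theorem is immediate: $n\geq 2$ and $I=1$ make the right-hand side $\leq 0$, hence $L(\Gamma)\leq 2\pi$. Note where the upper bound comes from: the minus sign in the renormalized formula, not from any "arrangement of the Crofton density."

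Your subsequent counting claim is also false and would prove too much. You assert that index $1$ forces $n(Y^\perp)\equiv 2$ for almost every contributing geodesic. Monotone winding and convexity of the projection control intersections with \emph{timelike} planes; they do not prevent $\Gamma$ from crossing a \emph{spacelike} great circle more than twice when the pole lies deep inside $\mathbb{H}^2$ (the paper's Lemma~\ref{lem-2I} deliberately asserts $n=2I$ only near the light cone / near $\partial\mathbb{H}^2$). In the renormalized identity, $n\equiv 2$ a.e.\ is precisely the equality case $L(\Gamma)=2\pi$; for a generic index-$1$ curve, $n>2$ on a set of positive measure, and that excess is exactly what makes the inequality strict. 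So if your claim were correct, every index-$1$ strong spacelike curve would have total curvature exactly $2\pi$, which is wrong (and is the opposite of how the estimate should be extracted). All that is needed — and all that is true — is $n\geq 2$ everywhere together with the signed, renormalized formula. Your equality discussion is the right idea in spirit (equality forces $n\equiv 2$ a.e., and rigidity then forces $\Gamma$ to be a spacelike great circle, i.e.\ $\gamma$ planar convex), but it must sit on top of the renormalized formula, not the divergent one.
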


Some people (as we did before) might naively expect that a closed strong spacelike curve in $\mathbb{R}^3_1$ with index $I\ge 2$ will have an upper bound $2I\pi$ for its total curvature. Yet this is not true: the total curvature in the general case for $I\ge 2$ can be arbitrarily large (like the example of a clam-shell curve in Section~4).

What if the closed curve in $\mathbb{R}^3_1$ becomes more complicated, not only with higher index $I>1$, but also being a nontrivial knot?
When $I=2$ we have the following interesting generalization of the Fary-Milnor theorem :

\begin{theorem}[The generalized Fary-Milnor theorem in $\mathbb{R}^3_1$]
\label{thm-farymilnor}
Let $\gamma$ be a closed strong spacelike curve in $\mathbb{R}^3_1$ with index $2$. Suppose it is a nontrivial knot. Then its total curvature $\int_\gamma k\mathrm{d}s<4\pi$.
\end{theorem}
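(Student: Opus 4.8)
The plan is to reduce the statement to an integral-geometric estimate on the tangent indicatrix and then combine it with Milnor's topological characterization of knottedness, but with the inequalities running in the reversed direction dictated by the Lorentzian signature. First I would identify the total curvature $\int_\gamma \kappa\,\mathrm{d}s$ with the length $L(T(S^1))$ of the tangent indicatrix $T\colon S^1\to\mathbb{S}^2_1$, which under the strong spacelike hypothesis ($\kappa>0$) is a closed spacelike immersed curve of index $2$ on the de Sitter sphere. Applying the Crofton formula established earlier, I would rewrite $L(T(S^1))$ as an integral, over the space of geodesics of $\mathbb{S}^2_1$, of the intersection multiplicity with $T(S^1)$. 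Parametrising a geodesic by a direction $v\in\mathbb{R}^3_1$, an intersection $T(t)\in v^\perp$ is exactly a critical point of the linear height function $h_v(t)=\langle\gamma(t),v\rangle$; thus the formula rewrites the total curvature in terms of the critical-point counts of the $h_v$. The reversed sign here is the same phenomenon that drives Theorem \ref{thm-fenchel}: timelike normal variations decrease the length of a spacelike curve, so spacelike geodesics are local length maximisers.

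Next I would import the topological input. For generic $v$ the height $h_v$ is a Morse function on $S^1$, its critical points split evenly into maxima and minima, and the minimum over all $v$ of the number of maxima is the bridge number of $\gamma$. Milnor's lemma states that if some $h_v$ has a single maximum---equivalently exactly two critical points---then $\gamma$ is unknotted. Consequently a nontrivial knot has bridge number at least $2$, so every generic $h_v$ has at least two maxima, i.e.\ at least four critical points. The strategy is to use this contrapositively: it suffices to prove that an index-$2$ strong spacelike closed curve whose total curvature is $\ge 4\pi$ must admit some direction $v$ for which $h_v$ has exactly two critical points, for then $\gamma$ is unknotted.

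To produce such a direction I would take $v$ timelike, so that $v^\perp\cap\mathbb{S}^2_1$ is a closed spacelike equator; an intersection of $T(S^1)$ with this equator records a critical point of $h_v$, and for a closed curve the identity $\oint T\,\mathrm{d}s=0$ forces at least two of them. The index-$2$ model to keep in mind is the doubly covered equator, of length exactly $4\pi$. Using the Crofton formula together with the reversed (length-maximising) behaviour of spacelike geodesics, I would argue that total curvature $\ge 4\pi$ cannot be spread out over all timelike directions without forcing the indicatrix into a near-equatorial configuration in some direction, in which $h_v$ realises its minimal two critical points; strictness of the final bound then comes from the rigidity in the equality case of this Crofton estimate, where only the (planar, hence unknotted) doubly covered equator is extremal.

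The main obstacle is precisely the reversal of the inequality relative to the Euclidean Fary--Milnor theorem. In $\mathbb{R}^3$ a small total curvature forces few critical points in some direction; here one must instead show that a \emph{large} total curvature ($\ge 4\pi$) produces a \emph{low}-crossing direction, and this depends delicately on how the indefinite Crofton measure weights timelike- versus spacelike-separated geodesics. In particular one must reconcile the estimate with the clam-shell examples, which show that for index $2$ the total curvature is otherwise unbounded: the argument has to certify that the curvature-inflating mechanism behind those examples is incompatible with knottedness, so that the high-curvature index-$2$ curves are all unknots. Carrying out this sign bookkeeping, and extracting the strict inequality from the equality/rigidity analysis, is the delicate heart of the proof.
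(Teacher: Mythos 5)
Your proposal assembles exactly the ingredients of the paper's proof: total curvature equals the length of the tangent indicatrix $T(\gamma)\subset\mathbb{S}^2_1$; the global Crofton formula \eqref{eq-2I} with $I=2$; the identification of intersections of $T(\gamma)$ with $Y^\perp$ as critical points of the height function $\langle\gamma,Y\rangle$; and the Milnor criterion that two critical points force unknottedness (which the paper does not cite but proves on the spot, by joining equal-height points of the two arcs by segments to produce an embedded disc bounded by $\gamma$). However, the paper runs the implication in the direct order, and this makes the proof one line long: $\gamma$ knotted $\Rightarrow$ $n(Y^\perp)\geq 4$ for \emph{every} $Y\in\mathbb{H}^2$ $\Rightarrow$ $L(T(\gamma))-4\pi=-\tfrac12\int_{\mathbb{H}^2}\bigl(n(Y^\perp)-4\bigr)\mathrm{d}Y\leq 0$, since the integrand is pointwise nonnegative and has compact support by Lemma~\ref{lem-2I}.

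The step you single out as ``the delicate heart'' --- showing that total curvature $\geq 4\pi$ forces a ``near-equatorial configuration'' in some direction, with careful ``sign bookkeeping'' over ``timelike- versus spacelike-separated geodesics'' --- is a phantom difficulty created by your contrapositive formulation; no such analysis exists or is needed. If $L>4\pi$, your own formula gives $\int_{\mathbb{H}^2}(n(Y^\perp)-4)\,\mathrm{d}Y<0$, hence a positive-measure set of poles with $n(Y^\perp)\le 3$, and a generic (Morse) pole in that set has exactly two critical points, so Milnor applies; nothing about the shape of the indicatrix enters. Note also that the moduli space in \eqref{eq-2I} consists only of spacelike closed geodesics (poles $Y\in\mathbb{H}^2$): timelike and lightlike planes do not give closed geodesics at all, and by Lemma~\ref{lem-2I} they meet the indicatrix in exactly $2I$ points, so there is no weighting between causal types to reconcile. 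The clam-shell examples likewise need no separate ``certification'': they are unknots admitting directions with $n=2$, which is precisely what lets the Crofton integral become very negative and the length large. The one genuine gap that remains in your plan is the borderline case $L=4\pi$: your strictness rests on an asserted rigidity statement (that only the doubly covered equator is extremal) which you do not prove, and which is not a formal consequence of the Crofton identity --- though, to be fair, the paper's own proof establishes $n(Y^\perp)\ge 4$ and invokes \eqref{eq-2I} without spelling out why the inequality is strict either.
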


One of the main technical difficulties in proving these theorems is the loss of compactness. For example, a powerful tool in proving both Fenchel theorem and Fary-Milnor theorem in $\mathbb{R}^3$ is the Crofton formula in the integral geometry, which gives an estimation of the length of an arc $\Gamma$ on $S^2$:
\begin{equation*}
\int_{S^2}n(Y^\perp)dA=4L(\Gamma).
\end{equation*}
Here $Y^\perp$ is the oriented great circle in $S^2$ with $Y$ as its pole, $n(Y^\perp)$ is the number of intersections of $Y^\perp$ with $\Gamma$, and $L(\Gamma)$ is the length of $\Gamma$. The key point is that $S^2$, the image space of the tangent indicatrix and the moduli space of the oriented great circles, is compact, endowed with the $\mathrm{SO}(3)$-invariant metric. In contrast, for a strong spacelike curve in the Lorentz 3-space, the tangent indicatrix $T(\gamma)$ is in the non-compact de Sitter sphere $\mathbb{S}^2_1$; the moduli space of the oriented great circles in $\mathbb{S}^2_1$ is the hyperbolic plan $\mathbb{H}^2$ which is also non-compact. So it seems no hope to establish a similar theory of integral geometry at the first glance.

Fortunately, when the pole $Y$ is very close to the lightcone (equivalently, when $Y$ tends to $\partial\mathbb{H}^2$, the boundary at infinity of the hyperbolic plane), the intersection number $n(Y^\perp)$ is a constant $2I$ where $I$ is the index of $\Gamma\subset \mathbb{S}^2_1$ (Lemma~\ref{lem-2I}). Thus we can still establish the Crofton formula in the de Sitter sphere as below (Theorem~\ref{thm-global}):
\begin{equation}
L(\Gamma)-2I\pi=-\frac{1}{2}\int_{\mathbb{H}^2}
(n(Y^\perp)-2I)\mathrm{d}Y.
\end{equation}
Here the integrand at the right hand side always has compact support.
Theorem~\ref{thm-fenchel} and \ref{thm-farymilnor} follows easily from this.

This paper is organized as follows. In Section~2 we derive the Crofton formula for piecewise smooth spacelike curves on $\mathbb{S}^2_1$. Applying this formula to the tangent indicatrix $T(\gamma)$ of a closed strong spacelike curve $\gamma$ in the Lorentz 3-space, we will obtain the reversed Fenchel inequality and the generalized Fary-Milnor theorem in Section~3. The discussions of the upper/lower bound of total curvature and illustrating examples are left to Section~4.

\section{The Crofton formula on the de Sitter sphere}

At the beginning we briefly review some standard definitions and fix the notations in a Lorentz space.
Let $\mathbb{R}^3_1$ be the 3-dimensional real linear space endowed with the Lorentz metric $\langle \cdot,\cdot\rangle$. In a fixed coordinate system, we may write
\[\langle X,Y\rangle=x_1y_1+x_2y_2-x_3y_3, ~~~X=(x_1,x_2,x_3), Y=(y_1,y_2,y_3).\]
A vector $X$ is called spacelike (lightlike, timelike) if $\langle X,X\rangle>0 (=0,<0)$, respectively. A subspace $V$ is called spacelike (lightlike, timelike) if the Lorentz inner product restricts to be a positive definite (degenerate, Lorentz) product on $V$. A curve or a surface in $\mathbb{R}^3_1$ is said to be spacelike (lightlike, timelike) if its tangent space at each point is so.

There are two different generalizations of the unit sphere in the Lorentz space: \emph{the de Sitter sphere} (the hyperboloid of one sheet)
\begin{equation*}
\mathbb{S}^2_1=\{X\in\mathbb{R}^3_1|\langle X,X\rangle=1\},
\end{equation*}
and \emph{the hyperbolic plane} (the upper half of the hyperboloid of two sheet)
\begin{equation}
\mathbb{H}^2=\{(x_1,x_2,x_3)\in\mathbb{R}^3_1|x_1^2+x_2^2-x_3^2=-1, x_3>0\}.
\end{equation}
The induced metric on $\mathbb{S}^2_1$ is a Lorentz metric of constant curvature $+1$, and $\mathbb{H}^2$ has an induced Riemannian metric with constant curvature $-1$. Besides them we have \emph{the light cone} consisting of lightlike vectors. The Lorentz group $O(2,1)$ acts on all of them as isometries.

Similar to the great circles on the round sphere $S^2\subset \mathbb{R}^3$, any spacelike geodesic on $\mathbb{S}^2_1$ is the intersection of $\mathbb{S}^2_1$ with a spacelike subspace of $\mathbb{R}^3_1$; it is always closed.

\begin{definition}
The unit normal vector (timelike, future directed) of the spacelike plane is called \emph{the pole} of the corresponding closed geodesic, which take values in $\mathbb{H}^2$. If we denote the pole by $Y$, the oriented closed spacelike geodesic will be denoted as $Y^\perp$. We will also abuse the notation $Y^\perp$ to denote the 2-dimensional subspace orthogonal to $Y$ if this will not cause confusion.
\end{definition}

Thus $Y,-Y$ will correspond to the same closed geodesic with opposite orientations. This establishes a 1-1 correspondence between the hyperbolic plane $\mathbb{H}^2$ and the space of oriented closed spacelike geodesics on $\mathbb{S}^2_1$.

$\mathbb{S}^2_1$ has a standard parametrization in terms of latitude and longitude $(\varphi,\theta)$ as below:
\[
(x_1,x_2,x_3)=
(\cosh\varphi\cos\theta,\cosh\varphi\sin\theta,\sinh\varphi).
\]
$\mathbb{H}^2$ has a similar expression $(x_1,x_2,x_3)=
(\sinh r\cos\theta,\sinh r\sin\theta,\cosh r).$
To overcome the difficulty of the non-compactness, let us consider a hyperbolic closed geodesic disk with radius $R$:
\begin{equation}
\mathbb{H}^2_R\triangleq\{(x_1,x_2,x_3)\in\mathbb{H}^2|1\leq x_3\leq\cosh R\}.
\end{equation}
By restricting to a compact domain we can establish the following local version of the Crofton formula. Notice that the hyperbolic geodesic disk $\mathbb{H}^2_R$, as well as the latitude and longitude parameters, are all defined under a fixed coordinate system, which will be used throughout the statement and the proof of the following theorem.

\begin{theorem}\label{thm-local}
[\textbf{Localized Crofton formula in a given coordinate system}].\\
Let $\Gamma$ be a piecewise smooth spacelike curve on $\mathbb{S}^2_1$ whose longitude ranges $\Delta\theta$ (with respect to the given coordinate). Then for a large enough radius $R$, we have:
\begin{equation}\label{eq-local}
\int_{\mathbb{H}^2_R}n(Y^\perp)\mathrm{d}Y=2\cosh R\cdot \Delta\theta-2L(\Gamma).
\end{equation}
Here $Y^\perp$ is the closed geodesic with $Y$ as its pole, $n(Y^\perp)$ is the number of intersections of $Y^\perp$ with $\Gamma$ (counted with multiplicity, yet without sign), and $L(\Gamma)$ is the length of $\Gamma$.
\end{theorem}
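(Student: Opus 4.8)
The plan is to prove the identity by an area-formula (change of variables) argument applied to a natural parametrization of the incidence set of pairs (point of $\Gamma$, pole of a geodesic through that point). First I set up an adapted frame along $\Gamma$: parametrize $\Gamma$ by arc length $s\in[0,L]$, write $T=\Gamma'$ for the spacelike unit tangent, and let $e$ be the unit timelike vector in $T_{\Gamma(s)}\mathbb{S}^2_1$ orthogonal to $T$, so that $\{\Gamma,T,e\}$ is a pseudo-orthonormal frame with $\langle\Gamma,\Gamma\rangle=\langle T,T\rangle=1$ and $\langle e,e\rangle=-1$. Differentiating the orthonormality relations gives the structure equations
\[
\Gamma'=T,\qquad T'=-\Gamma-\kappa_g\,e,\qquad e'=-\kappa_g\,T,
\]
where $\kappa_g=\langle T',e\rangle$ is the geodesic curvature of $\Gamma$ in $\mathbb{S}^2_1$. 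The key observation is that the poles $Y\in\mathbb{H}^2$ of the geodesics through a fixed point $\Gamma(s)$ are exactly the future-directed timelike unit vectors orthogonal to $\Gamma(s)$, and these sweep out the hyperbolic geodesic $\ell_{\Gamma(s)}$ parametrized by arc length as $\tau\mapsto\sinh\tau\,T(s)+\cosh\tau\,e(s)$.

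This suggests defining $\Phi(s,\tau)=\sinh\tau\,T(s)+\cosh\tau\,e(s)\in\mathbb{H}^2$ on $[0,L]\times\mathbb{R}$. For almost every $Y\in\mathbb{H}^2$ the preimages $\Phi^{-1}(Y)$ are in bijection with the intersection points $\Gamma(s)\in Y^\perp$ (each such $s$ determines a unique $\tau$ on $\ell_{\Gamma(s)}$), so the multiplicity of $\Phi$ over $Y$ equals $n(Y^\perp)$. Hence the area formula yields
\[
\int_{\mathbb{H}^2_R}n(Y^\perp)\,\mathrm{d}Y=\int_{\Phi^{-1}(\mathbb{H}^2_R)}|J\Phi|\,\mathrm{d}s\,\mathrm{d}\tau .
\]
Using the structure equations, $\Phi_\tau=\cosh\tau\,T+\sinh\tau\,e$ is a unit vector while $\Phi_s=-\sinh\tau\,\Gamma-\kappa_g\cosh\tau\,T-\kappa_g\sinh\tau\,e$, and a short computation gives $\langle\Phi_s,\Phi_s\rangle=\sinh^2\tau+\kappa_g^2$ and $\langle\Phi_s,\Phi_\tau\rangle=-\kappa_g$, so that
\[
|J\Phi|^2=\langle\Phi_s,\Phi_s\rangle\langle\Phi_\tau,\Phi_\tau\rangle-\langle\Phi_s,\Phi_\tau\rangle^2=\sinh^2\tau .
\]
The pleasant point is that $\kappa_g$ cancels completely, leaving $|J\Phi|=|\sinh\tau|$.

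It remains to identify the domain $\Phi^{-1}(\mathbb{H}^2_R)$ and integrate. Writing $N=(0,0,1)$ for the center of $\mathbb{H}^2_R$, the radial coordinate of $Y$ obeys $\cosh r=-\langle Y,N\rangle$, so $\mathbb{H}^2_R=\{-\langle Y,N\rangle\le\cosh R\}$. Denoting by $\varphi(s),\theta(s)$ the latitude and longitude of $\Gamma(s)$, one computes from the parametrization that $\langle T,N\rangle=-\varphi'\cosh\varphi$ and $\langle e,N\rangle=-\cosh^2\varphi\,\theta'$; setting $\sinh\mu=\varphi'$ and $\cosh\mu=\cosh\varphi\,\theta'$ (consistent, since $\cosh^2\varphi\,\theta'^2-\varphi'^2=1$ for an arc-length spacelike curve), this collapses to the clean form
\[
-\langle\Phi(s,\tau),N\rangle=\cosh\varphi\,\cosh(\tau+\mu).
\]
Thus for each fixed $s$ the condition $Y\in\mathbb{H}^2_R$ reads $\cosh(\tau+\mu)\le\cosh R/\cosh\varphi=:\cosh\rho$, i.e. $\tau\in[-\mu-\rho,\,-\mu+\rho]$. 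Since $\Gamma$ is compact, $\varphi,\varphi',\theta'$ are bounded, so $\mu$ stays bounded while $\rho\to\infty$ as $R\to\infty$; hence for $R$ large enough $\rho>|\mu|$ for every $s$, which places $\tau=0$ in the interior of the interval and gives
\[
\int_{-\mu-\rho}^{-\mu+\rho}|\sinh\tau|\,\mathrm{d}\tau=2\cosh\rho\cosh\mu-2=2\cosh R\,\theta'-2 .
\]
Integrating over $s$ (orienting $\Gamma$ so that $\theta'>0$, whence $\Delta\theta=\int_0^L\theta'\,\mathrm{d}s$) produces $2\cosh R\,\Delta\theta-2L(\Gamma)$, as claimed.

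I expect the main obstacle to be the bookkeeping in this last step, namely verifying that the hypothesis ``$R$ large enough'' is precisely what eliminates the two degenerate cases $|\mu|\ge\rho$, in which $\tau=0$ leaves the interval and the absolute value $|\sinh\tau|$ produces the different, non-telescoping value $2\sinh\rho\,|\sinh\mu|$. A secondary technical point is justifying the area formula for a merely piecewise smooth $\Gamma$: I would argue arc by arc, observing that the set of poles $Y$ whose geodesic $Y^\perp$ is tangent to $\Gamma$ or passes through a corner forms the envelope of the $\ell_{\Gamma(s)}$ and hence has measure zero in $\mathbb{H}^2$, so that $n(Y^\perp)$ is finite for almost every $Y$ and the exceptional set does not affect the integral.
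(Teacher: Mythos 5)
Your proposal is correct and follows essentially the same route as the paper's proof: you parametrize the poles of the geodesics through $\Gamma(s)$ by a hyperbolic angle, compute the pulled-back area element (your $|\sinh\tau|\,\mathrm{d}s\,\mathrm{d}\tau$ is exactly the paper's $\sinh|\tau(s)-\psi|\,\mathrm{d}\psi\,\mathrm{d}s$ under the shift $\psi=\tau+\mu$), identify the region of the $(s,\tau)$-plane corresponding to $\mathbb{H}^2_R$, and evaluate the iterated integral, with the same largeness condition on $R$ ensuring the non-degenerate case where the telescoping value $2\cosh R\,\theta'-2$ appears. The only differences are cosmetic: you work in the Frenet frame $\{T,e\}$, so the geodesic curvature enters and cancels in the Jacobian, while the paper uses the coordinate frame $\{e_2,e_3\}$ where the Jacobian is immediate, and you make explicit the area-formula and measure-zero justifications that the paper leaves implicit.
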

To prove this theorem, let us find out the subset of the poles $Y\in\mathbb{H}^2$ so that $Y^\perp$ has non-empty intersection with $\Gamma$.
Without loss of generality, write $\Gamma=\{e_1(s)\in \mathbb{S}^2_1|s\in[0,L]\}$ as a smooth curve with arc-length parameter $s$ and length $L=L(\Gamma)$, where
\begin{equation}\label{eq-e1}
e_1(s)=(\cosh\varphi(s)\cos\theta(s),\cosh\varphi(s)\sin\theta(s),
\sinh\varphi(s)).
\end{equation}
Set
\begin{eqnarray}
e_2(s)&=&(-\sin\theta(s),\cos\theta(s),0),\\
e_3(s)&=&(\sinh\varphi(s)\cos\theta(s),
\sinh\varphi(s)\cos\theta(s),\cosh\varphi(s)).\label{eq-e3}
\end{eqnarray}
Then $\{e_1(s),e_2(s),e_3(s)\}$ forms an orthonormal frame in $\mathbb{R}^3_1$ along $\Gamma$. The structure equations read
\begin{eqnarray}\label{eq-de1}
\begin{cases}
\frac{\mathrm{d}e_1}{\mathrm{d}s}=\cosh\varphi(s)\theta'(s)e_2
+\varphi'(s)e_3,\\
\frac{\mathrm{d}e_2}{\mathrm{d}s}=-\cosh\varphi(s)\theta'(s)e_1
+\sinh\varphi(s)\theta'(s)e_3,\\
\frac{\mathrm{d}e_3}{\mathrm{d}s}=\varphi'(s)e_1+\sinh\varphi(s)
\theta'(s)e_2.
\end{cases}
\end{eqnarray}
Since $\Gamma$ is spacelike and $s$ is the arc-length parameter, we have:
\begin{equation}\label{eq-theta}
\cosh^2\varphi(s)\cdot\theta'(s)^2-\varphi'(s)^2=1.
\end{equation}

\begin{remark}\label{rem-convex}
As a corollary to \eqref{eq-theta}, there is always $\theta'(s)\ne 0$.
Without loss of generality, suppose $\theta'(s)>0$ everywhere. This immediately implies that the projection of $\Gamma$ to the $Ox_1x_2$-plane is a plane curve whose tangent vector rotates in a monotonic way, hence a convex plane curve in a general sense. In particular, for a closed strong spacelike curve of index $I=1$, its orthogonal projection to a spacelike plane is always a strictly convex Jordan curve.
\end{remark}

By \eqref{eq-theta}, there exists a function $\tau(s)$ such that
\begin{equation}\label{eq-tau}
\cosh\varphi(s)\theta'(s)=\cosh\tau(s),~\varphi'(s)=\sinh\tau(s).
\end{equation}
For a closed spacelike geodesic $Y^\perp$ intersecting $\Gamma$ at $e_1(s)$, its pole can be written as
\begin{equation}\label{eq-Y}
Y=\sinh\psi e_2(s)+\cosh\psi e_3(s).
\end{equation}
Note that $(s,\psi)$ gives a natural parametrization of the image of such poles.
Restricted to $\mathbb{H}^2_R$, by \eqref{eq-e3} there must be
\begin{equation}\label{eq-Ybound}
Y\in\mathbb{H}^2_R~~\Leftrightarrow~~
\cosh\psi\cosh\varphi(s)\leq\cosh R.
\end{equation}
Let's differentiate $Y$ to find its geometric information:
\begin{eqnarray}\label{eq-dY2}
\begin{cases}
\frac{\partial Y}{\partial\psi}=\cosh\psi e_2+\sinh\psi e_3,\\
\frac{\partial Y}{\partial s}=\sinh(\tau(s)-\psi)e_1+\sinh\varphi(s)\theta'(s)\frac{\partial Y}{\partial\psi}.
\end{cases}
\end{eqnarray}
The pull-back of the area element from the hyperbolic plane $\mathbb{H}^2$ by $Y$ is
\begin{equation}\label{eq-dY1}
\mathrm{d}Y=\sinh|\tau(s)-\psi|\mathrm{d}\psi \mathrm{d}s.
\end{equation}
\begin{remark}\label{rem-degenerate}
We can read out the information of the pull-back metric by $Y$ from \eqref{eq-dY2} and \eqref{eq-dY1}. It degenerates when $\tau(s)=\psi$.
By \eqref{eq-Y}, \eqref{eq-tau} and the first formula in \eqref{eq-de1}, we see that this happens exactly when the closed geodesic $Y^\perp$ intersects with $\Gamma$ non-transversely.
\end{remark}

\begin{proof}[\textbf{Proof to Theorem~\ref{thm-local}}]~~
Rewrite the left hand side of \eqref{eq-local} as an iterated integral:
\begin{equation}\label{eq-integral}
\int_{\mathbb{H}^2_R}n(Y^\perp)\mathrm{d}Y=\int_0^L \mathrm{d}s\int_{\psi_1(s)}^{\psi_2(s)}
\sinh|\tau(s)-\psi|\mathrm{d}\psi.
\end{equation}
We need to choose the radius $R$ large enough by the following considerations:
\begin{enumerate}
\item To determine the lower and upper limit $\psi_1(s),\psi_2(s)$ for any given $s$, note that the poles
$\{Y|Y^\perp\cap\Gamma=e_1(s)\}$ form a subset of $\mathbb{H}^2$, which we hope to has nonempty intersection with $\mathbb{H}^2_R$. By \eqref{eq-Ybound} one needs only to choose $R$ so large that
\begin{equation}\label{eq-Rbound1}
\text{max}_{s\in[0,L]}\cosh\varphi(s)<\cosh R.
\end{equation}
Then for any $s$, the inequality \eqref{eq-Ybound} always has solutions $\psi\in [\psi_1(s),\psi_2(s)]$ with \begin{equation}\label{eq-coshpsi}
\cosh\psi_i(s)=\frac{\cosh R}{\cosh\varphi(s)}.
\end{equation}
\item To determine the sign of $\tau(s)-\psi_i$, note that
$\cosh\varphi(s)$ is bounded on $s\in [0,L]$, hence $\cosh\psi_i(s)$ increases with $R$ according to
\eqref{eq-coshpsi}, yet $\cosh\tau(s)$ is uniformly bounded by
\eqref{eq-tau}. Choose $R$ so large that \begin{equation}\label{eq-Rbound2}
\text{max}_{s\in[0,L]}\cosh^2\varphi(s)\theta'(s)<\cosh R.
\end{equation}
Then $\psi_1(s)<\tau(s)<\psi_2(s)$ for each $s$.
\end{enumerate}
Now the first integral in \eqref{eq-integral} can be evaluated as below:
\begin{eqnarray*}
\int_{\psi_1(s)}^{\psi_2(s)}\sinh|\tau(s)-\psi|\mathrm{d}\psi
&=&\int_{\psi_1(s)}^{\tau(s)}\sinh(\tau(s)-\psi)\mathrm{d}\psi
+\int_{\tau(s)}^{\psi_2(s)}\sinh(\psi-\tau(s))\mathrm{d}\psi\\
&=&\cosh(\tau(s)-\psi_1(s))+\cosh(\psi_2(s)-\tau(s))-2\\
&=&\cosh\tau(s)(\cosh\psi_1(s)+\cosh\psi_2(s))-2\\
&=&2\cosh R\cdot\theta'(s)-2.
\end{eqnarray*}
Here we used \eqref{eq-coshpsi} and \eqref{eq-tau} in the last step. The conclusion follows easily.
\end{proof}

\begin{corollary}
Let $\Gamma$ be a closed piecewise smooth spacelike curve on $\mathbb{S}^2_1$ with index $I$ (the longitude of $\Gamma$ ranges $2I\pi$). For $R$ large enough there is
\begin{equation}\label{eq-closed}
\int_{\mathbb{H}^2_R}n(Y^\perp)\mathrm{d}Y=4I\pi\cosh R-2L(\Gamma).
\end{equation}
\end{corollary}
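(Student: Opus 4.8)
The plan is to derive this directly from the localized formula of Theorem~\ref{thm-local}, the only new ingredient being the global identification of the total longitude range.

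First I would observe that a closed curve $\Gamma$ is compact, so both $\cosh\varphi(s)$ and $\cosh^2\varphi(s)\theta'(s)$ are bounded along $\Gamma$; hence a single radius $R$ can be chosen large enough to satisfy \eqref{eq-Rbound1} and \eqref{eq-Rbound2} simultaneously, and Theorem~\ref{thm-local} then applies on all of $\Gamma$ at once. Since by Remark~\ref{rem-convex} we may assume $\theta'(s)>0$ everywhere, the longitude $\theta(s)$ is strictly monotone, and over one full traversal of the closed curve it increases by exactly $2I\pi$; this is precisely the definition of the index $I$ as the winding number of $\Gamma$ about $\mathbb{S}^2_1$. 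Thus the total range is $\Delta\theta=2I\pi$.

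For the piecewise smooth case I would cut $\Gamma$ at its finitely many corners into smooth spacelike arcs $\Gamma_1,\dots,\Gamma_m$, with $\sum_j\Delta\theta_j=2I\pi$ and $\sum_j L(\Gamma_j)=L(\Gamma)$. The intersection count is additive, $n(Y^\perp)=\sum_j n(Y^\perp\cap\Gamma_j)$, for all poles $Y$ except those whose geodesic $Y^\perp$ passes through a corner; these form a set of measure zero in $\mathbb{H}^2$ and so do not affect the integral. Applying Theorem~\ref{thm-local} to each arc $\Gamma_j$ and summing then yields
\[
\int_{\mathbb{H}^2_R}n(Y^\perp)\mathrm{d}Y=\sum_{j}\bigl(2\cosh R\cdot\Delta\theta_j-2L(\Gamma_j)\bigr)=2\cosh R\cdot 2I\pi-2L(\Gamma),
\]
which is the claimed identity $4I\pi\cosh R-2L(\Gamma)$.

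There is no serious obstacle here: the statement is essentially the substitution $\Delta\theta=2I\pi$ into \eqref{eq-local}. The only points that genuinely require care are verifying that one radius $R$ works uniformly for the whole (possibly multiply-wound) curve, and checking that the corners contribute nothing to the integral — both of which follow from compactness and the measure-zero argument above.
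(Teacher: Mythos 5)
Your proof is correct and takes essentially the same route as the paper, which states this corollary without proof because it is exactly the substitution $\Delta\theta=2I\pi$ (the definition of index, justified by the monotonicity $\theta'(s)>0$ from \eqref{eq-theta}) into the localized formula \eqref{eq-local}. Your extra care --- choosing one radius $R$ uniformly by compactness, and handling corners of a piecewise smooth curve by splitting into smooth arcs and discarding the measure-zero set of poles whose geodesics hit a corner --- is sound and merely fills in details the paper leaves implicit.
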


The results above are not quite satisfying for us, because they are only meaningful with respect to a fixed coordinate system, and some artificial constant $R$ is involved. So we proceed to derive a geometrically invariant version.

\begin{theorem}\label{thm-global}
[\textbf{Global Crofton formula in $\mathbb{S}^2_1$}] ~~Notations as above. Consider a closed piecewise smooth spacelike curve on $\mathbb{S}^2_1$ with index $I\ge 1$.
\begin{enumerate}
\item For radius $R$ large enough,
\begin{equation}\label{eq-2IR}
\int_{\mathbb{H}^2_R}(n(Y^\perp)-2I)\mathrm{d}Y=-2L(\Gamma)+4I\pi.
\end{equation}
\item When $R\rightarrow+\infty$, the integral is still meaningful and we have
\begin{equation}\label{eq-2I}
L(\Gamma)-2I\pi=-\frac{1}{2}\int_{\mathbb{H}^2}(n(Y^\perp)-2I)
\mathrm{d}Y.
\end{equation}
\end{enumerate}
\end{theorem}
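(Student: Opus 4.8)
The plan is to obtain both statements from the closed-curve corollary \eqref{eq-closed}, with Part~1 reducing to an area computation and Part~2 to a limiting argument that hinges on the stabilization of the intersection number near the lightcone. For Part~1, I would split
\[
\int_{\mathbb{H}^2_R}(n(Y^\perp)-2I)\mathrm{d}Y
=\int_{\mathbb{H}^2_R}n(Y^\perp)\mathrm{d}Y-2I\cdot\mathrm{Area}(\mathbb{H}^2_R)
\]
and insert \eqref{eq-closed} for the first summand. It then remains to compute the area. The key observation is that $\mathbb{H}^2_R$ is precisely the geodesic disk of radius $R$ centered at $(0,0,1)$: in the polar parametrization $(\sinh r\cos\theta,\sinh r\sin\theta,\cosh r)$ the defining condition $1\le x_3\le\cosh R$ reads $0\le r\le R$, and the induced metric is the standard $\mathrm{d}r^2+\sinh^2 r\,\mathrm{d}\theta^2$, so $\mathrm{Area}(\mathbb{H}^2_R)=2\pi(\cosh R-1)$. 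Substituting, the two $4I\pi\cosh R$ terms cancel and one is left with $-2L(\Gamma)+4I\pi$, which is exactly \eqref{eq-2IR}.

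For Part~2, the crucial input is Lemma~\ref{lem-2I}: for $Y$ sufficiently close to $\partial\mathbb{H}^2$ the intersection number $n(Y^\perp)$ is identically $2I$. Hence the integrand $n(Y^\perp)-2I$ vanishes outside a compact subset of $\mathbb{H}^2$. Once $R$ is large enough that $\mathbb{H}^2_R$ contains this support, the integral over $\mathbb{H}^2_R$ no longer changes with $R$; combined with Part~1, this confirms that the right-hand side of \eqref{eq-2IR} is indeed independent of $R$ for all large $R$, as the displayed formula already shows. Therefore the improper integral over the whole of $\mathbb{H}^2$ converges and equals $-2L(\Gamma)+4I\pi$; dividing by $-2$ yields \eqref{eq-2I}.

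The main obstacle lies entirely in justifying that $n(Y^\perp)-2I$ has compact support, i.e. in Lemma~\ref{lem-2I} and the control of how $Y^\perp$ meets $\Gamma$ as $Y$ approaches the lightcone. Everything downstream — the area formula and the passage to the limit — is routine. This is also the conceptual heart of the result: without the a priori constancy $n(Y^\perp)=2I$ near infinity there would be no reason for the integral over the non-compact $\mathbb{H}^2$ to converge, so it is exactly here that the geometry of the de Sitter sphere, rather than mere manipulation of \eqref{eq-closed}, enters.
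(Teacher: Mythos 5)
Your proposal is correct and follows essentially the same route as the paper: Part~1 combines the closed-curve corollary \eqref{eq-closed} with the hyperbolic disk area $2\pi(\cosh R-1)$, and Part~2 invokes Lemma~\ref{lem-2I} to conclude that $n(Y^\perp)-2I$ has compact support, so the integral stabilizes and the improper integral over $\mathbb{H}^2$ equals $-2L(\Gamma)+4I\pi$. Your identification of Lemma~\ref{lem-2I} as the conceptual heart of the argument matches the paper's own structure exactly.
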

\begin{proof}
The area of the hyperbolic disc is given by
\begin{equation}\label{eq-area}
\int_{\mathbb{H}^2_R}\mathrm{d}Y=2\pi(\cosh R-1).
\end{equation}
Combining \eqref{eq-area} and \eqref{eq-closed} we obtain the first conclusion. The second one follows if we can show $n(Y^\perp)-2I$ is zero outside a compact domain. This is guaranteed by the next lemma.
\end{proof}

\begin{lemma}\label{lem-2I}
Let $\Gamma$ be a given closed smooth spacelike curve on $\mathbb{S}^2_1$ with index $I>0$ and $Y^\perp$ is a 2-dimensional subspace orthogonal to $Y$. Then the intersection number $n(Y^\perp)=\sharp\{\Gamma\cap Y^\perp\}$ must be $2I$ in the following two cases:
\begin{enumerate}
\item $Y^\perp$ is timelike or lightlike (i.e., $Y$ is spacelike or lightlike);
\item $Y^\perp$ is spacelike and close enough to the lightcone (i.e., $Y$ is timelike, and the corresponding point on $\mathbb{H}^2$ is contained in a suitable neighborhood of $\partial\mathbb{H}^2$.)
\end{enumerate}
\end{lemma}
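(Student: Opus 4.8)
The plan is to count the zeros of the function $f_Y(s) = \langle e_1(s), Y\rangle$ along $\Gamma$, since a point $e_1(s)$ lies on $Y^\perp$ precisely when $f_Y(s)=0$. First I would record the derivative: using the parametrization \eqref{eq-e1}--\eqref{eq-e3} together with the abbreviation \eqref{eq-tau}, the first structure equation in \eqref{eq-de1} gives $e_1'(s)=\cosh\tau(s)\,e_2(s)+\sinh\tau(s)\,e_3(s)$, so at any intersection point $s_0$ we have $f_Y'(s_0)=\cosh\tau\,\langle e_2,Y\rangle+\sinh\tau\,\langle e_3,Y\rangle$. Writing $Y=a\,e_2(s_0)+b\,e_3(s_0)$ (there is no $e_1$-component since $Y\perp e_1(s_0)$) and using $\langle e_2,e_2\rangle=1$, $\langle e_3,e_3\rangle=-1$, this becomes $f_Y'(s_0)=a\cosh\tau-b\sinh\tau$, while $\langle Y,Y\rangle=a^2-b^2$. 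This yields a clean transversality criterion: the intersection is transverse iff $a\cosh\tau\neq b\sinh\tau$, and since $|\tanh\tau|<1$ this holds automatically whenever $|a|\geq|b|$, i.e.\ whenever $Y$ is spacelike or lightlike.

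For Case 1 I would argue by a continuity/connectedness principle. Because every intersection is then transverse, the integer $n(Y^\perp)$ is locally constant as the pole ranges over the space of spacelike-or-lightlike directions: the zeros of $f_Y$ are isolated and, $\Gamma$ being compact, finite in number, so they can neither merge, appear, nor escape without a tangency. This direction space --- the exterior of the light cone together with the light cone itself, modulo scaling --- is connected, hence $n(Y^\perp)$ takes a single value throughout. It then suffices to evaluate it at one convenient pole, say $Y=(1,0,0)$: here $f_Y(s)=\cosh\varphi(s)\cos\theta(s)$ vanishes exactly when $\cos\theta(s)=0$, and since $\theta$ is strictly monotone with total variation $2I\pi$ (Remark~\ref{rem-convex}), there are exactly $2I$ such values. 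Thus $n(Y^\perp)=2I$ for all spacelike or lightlike $Y$.

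Case 2 is where transversality genuinely fails: for timelike $Y$ one has $|a|<|b|$, and $a\cosh\tau=b\sinh\tau$ can occur, so the locally-constant argument does not extend directly into $\mathbb{H}^2$. The plan is instead to perturb off the light cone. Fix a lightlike direction $Y_\infty\in\partial\mathbb{H}^2$, at which $f_{Y_\infty}$ has exactly $2I$ simple zeros and is bounded away from $0$ elsewhere. A $C^1$-small change of the pole keeps $f_Y$ bounded away from $0$ off small neighborhoods of those zeros and keeps $f_Y'$ nonzero on them, so each simple zero persists as a single transverse zero and no new zeros are born; hence $n(Y^\perp)=2I$ for every timelike $Y$ whose direction is close enough to $Y_\infty$. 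Finally, since $\partial\mathbb{H}^2$ is a compact circle, finitely many such neighborhoods cover it and produce a uniform neighborhood of the boundary at infinity --- equivalently, all poles $Y=(\sinh r\cos\alpha,\sinh r\sin\alpha,\cosh r)$ with $r$ larger than some $R_0$ independent of $\alpha$.

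I expect the main obstacle to be exactly this uniformity in Case 2: upgrading the pointwise persistence of the $2I$ transverse intersections into one neighborhood of the entire boundary circle, and in particular ruling out extra zeros of $f_Y$ that might appear far from the limiting intersections as $Y$ tends to the light cone. Controlling this needs the $C^1$-closeness of $f_Y$ to $f_{Y_\infty}$ to be uniform over the compact boundary $\partial\mathbb{H}^2$ and over the compact curve $\Gamma$ simultaneously, which is precisely what the compactness of these two objects supplies.
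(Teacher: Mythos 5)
Your proposal is correct, but it takes a genuinely different route from the paper's own proof. The paper argues computationally: it normalizes $Y=(0,1,a)$ with $a\ge 0$, identifies the intersections with the zeros of $\sin\theta(s)-a\tanh\varphi(s)$, and shows via an explicit derivative estimate (using \eqref{eq-theta} and \eqref{eq-tau}) that the auxiliary function $f_a(s)=\theta(s)-\arcsin(a\tanh\varphi(s))$ is strictly increasing on each longitude interval of length $\pi$, hence has exactly one zero there; the spacelike/lightlike case $a\le 1$ and the timelike case $a>1$ (with $a$ close to $1$, uniformly bounded against $1/|\tanh\tau|$) are handled by the same monotonicity computation. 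You instead replace the computation by a soft topological argument: transversality of every intersection when $Y$ is spacelike or lightlike (your identity $f_Y'(s_0)=a\cosh\tau-b\sinh\tau$ with $\langle Y,Y\rangle=a^2-b^2$ is the coordinate-free counterpart of the paper's estimate), local constancy of $n(Y^\perp)$ plus connectedness of the set of spacelike-or-lightlike directions, evaluation at the single pole $(1,0,0)$ where $\theta$-monotonicity from Remark~\ref{rem-convex} gives exactly $2I$ zeros, and then persistence of simple zeros plus compactness of $\partial\mathbb{H}^2$ to push the count across the lightcone into $\mathbb{H}^2$. This is in effect a rigorous implementation of the ``conceptual proof'' that the paper itself only sketches in the remark following the lemma (the regular set-valued map $\Phi_\Gamma$ away from the co-normal cone). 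What the paper's computation buys is an effective description of the ``suitable neighborhood'' of $\partial\mathbb{H}^2$, namely $1<a<\min_{s}1/\tanh\varphi(s)$ together with the uniform $\epsilon$-bound, and the finer structural fact that each half-turn of longitude contributes exactly one intersection; what your approach buys is coordinate independence and brevity, at the price of a non-constructive neighborhood. Your handling of the one delicate point---uniformity of the perturbation over the compact circle of lightlike directions and over the compact curve, so that no extra zeros appear anywhere on $\Gamma$---is exactly right, so the argument is complete.
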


\begin{proof}
Without loss of generality, suppose that $\Gamma$ is parameterized by \eqref{eq-e1} and $Y=(0,1,a)$ with $a\geq0$. Then the intersection points correspond exactly to the zeros of $\sin\theta(s)-a\tanh\varphi(s)$.

Recall that there is a 1-1 correspondence between the longitude parameter $\theta\in[-\frac{\pi}{2},2I\pi-\frac{\pi}{2})$ and the arc-length parameter $s\in[0,L)$ when $\Gamma$ has index $I$. We will show that on each interval $\theta\in(-\frac{\pi}{2}+n\pi,\frac{\pi}{2}+n\pi)$ for $n=0,1,\cdots,2I-1$ there is a unique zero of $\sin\theta(s)-a\tanh\varphi(s)$. We prove this when $n=0$ (for other values of $n$ the proof is similar). Equivalently we consider the zeros of the continuous function
\begin{equation}\label{eq-zero}
f_a(s)\triangleq \theta(s)-\arcsin(a\tanh\varphi(s)).
\end{equation}
Note that $|a\tanh\varphi(s)|< 1$ when $0\le a\le 1$ in the first case (for timelike or lightlike 2-plane), or when $1<a<\text{min}_{s\in[0,L]}\{\frac{1}{\tanh\varphi(s)}\}$ in the second case.

It is clear that $f_a<0$ when $\theta=-\frac{\pi}{2}$, and $f_a>0$ when $\theta=\frac{\pi}{2}$.
So $f_a$ has at least one zero on $(-\frac{\pi}{2},\frac{\pi}{2})$. We will show $f_a$ is monotonically increasing on this interval by computing its derivative:
\[
f_a'(s)
=\theta'(s)-\frac{\varphi'(s)/\cosh\varphi}
{\sqrt{\frac{\cosh^2\varphi}{a^2}-\sinh^2\varphi}}
\ge \theta'(s)-\frac{|\varphi'(s)|}{\cosh\varphi}>0
\]
when $a\le 1$. In the last step we used \eqref{eq-theta}.
If $a>1$, we can still choose $a$ so close to $1$ such that \[\frac{1}{\sqrt{\frac{\cosh^2\varphi}{a^2}-\sinh^2\varphi}}
<1+\epsilon
<\theta'(s)\frac{\cosh\varphi(s)}{|\varphi'(s)|}
=\frac{1}{|\tanh\tau(s)|}
\]
holds uniformly on $[0,L]$ for some small $\epsilon>0$. Thus in all cases we have proved that $f_a$ is a monotonic function on $(-\frac{\pi}{2},\frac{\pi}{2})$ with a unique zero.
The rest is easy and the proof is finished.
\end{proof}

Lemma~\ref{lem-2I} is a natural fact from a geometric viewpoint. When the subspace $Y^\perp$ is timelike or lightlike, a closed spacelike curve $\Gamma$ must wind around the $x_3$ axis in a monotonic way and intersect $Y^\perp$ exactly two times in one cycle, hence with $2I$ intersection points when the index is $I$. If we perturb $Y$ to be a timelike vector and $Y^\perp$ to be spacelike, this intersection number $n(Y^\perp)=\sharp\{\Gamma\cap Y^\perp\}$ should be the same constant $2I$ if the perturbation is small enough, because $\Gamma$ is compact and $n(Y^\perp)$ is an integer depending on $Y$ continuously. This observation led us to the discovery of this Crofton formula.

\begin{remark}
We can provide another conceptual proof to Lemma~\ref{lem-2I}. Consider the following set-valued mapping $\Phi_\Gamma$ from $\mathbb{R}^3_1\setminus\{0\}$ to subsets of $\Gamma$:
\[
\Phi_\Gamma(Y)\triangleq\Gamma\cap Y^\perp=
\{\Gamma(s)|\langle e_1(s),Y\rangle=0\}.
\]
Notice that the critical points of this mapping $\Phi$ is nothing but the locus of the co-normal vectors of the closed curve $\Gamma$ as given by
\[\{Y|\langle e_1(s),Y\rangle=0=\langle e_1'(s),Y\rangle,
~~\text{for some}~s\in [0,L]\}.
\]
These timelike lines encloses a cone $\Omega$ inside the lightcone. Its complement in $\mathbb{R}^3_1\setminus\{0\}$ is a connected domain $\Omega^*$ which also includes all the spacelike and (nonzero) lightlike vectors. $\Phi_\Gamma$ is a regular set-valued mapping from $\Omega^*$ to finite subsets of $\Gamma$. In particular, the counting function $\sharp(\Phi_\Gamma(Y))$ (counting the intersection number) is a continuous function with positive integer values. This must be a constant, which is easy to show to be $2I$.
\end{remark}

\begin{remark}
After finishing this work, we noticed that Solanes and Teufel have obtained more general results on similar Crofton formulas for all Lorentz space forms \cite{Solanes}.
\end{remark}

\section{Application to closed spacelike curves in $\mathbb{R}^3_1$}

In this section we will apply the Crofton formula \eqref{eq-2I} to the tangent indicatrix of a closed strong spacelike curve in $\mathbb{R}^3_1$.

\begin{definition}
Let $\gamma:(a,b)\rightarrow R^{3}_{1}$ be a spacelike curve. We call $\gamma$ a strong spacelike curve if its osculating plane is spacelike and the curvature $\kappa$ is positive everywhere. (This includes closed convex plane curves on a spacelike plane as a special case.)
\end{definition}

The Frenet frame $\{T,N,B\}$ for a strong spacelike curve is defined similar to the Euclidean case with the following Frenet formula:
\begin{equation}
\frac{d}{d s}\left(
                               \begin{array}{c}
                                 T \\
                                 N \\
                                 B \\
                               \end{array}
                             \right)
                             =\left(
                                \begin{array}{ccc}
                                  0  & k    & 0    \\
                                  -k & 0    & \tau \\
                                  0  & \tau & 0    \\
                                \end{array}
                              \right)
                              \left(
                                \begin{array}{c}
                                  T \\
                                  N \\
                                  B \\
                                \end{array}
                              \right)~.
\end{equation}
Here $N,B$ are the normal and the bi-normal vectors, respectively, and $\tau$ is the torsion. Note that the coefficient matrix is no longer skey-symmetric because the inner product here is not positive definite.

\begin{remark}\label{rem-Ntimelike}
It is easy to see that there exist no regular closed spacelike curves in $R^{3}_{1}$ with timelike normal vector everywhere.
Otherwise, the tangent indicatrix will be a timelike curve on the de Sitter sphere. But on $\mathbb{S}^2_1$ there are no such closed timelike curves.
\end{remark}

Only for a strong spacelike curve $\gamma$ (regular and closed)
the total curvature has a natural definition, which is also the length of its tangent indicatrix $T(\gamma)\subset \mathbb{S}^2_1$. By assumption, we can apply the global Crofton formula \eqref{eq-2I} to $T(\gamma)$, which is a spacelike closed curve with index $I=1$.

\begin{proof}[\textbf{Proof of the Fenchel theorem \ref{thm-fenchel}}]

Because $T=\gamma'(s)$ and $\int_{\gamma}(T,Y)\mathrm{d}s=0$ for any fixed (timelike) vector $Y$, this forces that $T(\gamma)$ can not be contained in one side of any geodesic. So it must intersect every closed geodesic with at least two intersection points. Now $n(Y^\perp)\geq 2$ in \eqref{eq-2I} for each $Y\in\mathbb{H}^2$, which implies
\[
\int_\gamma k\mathrm{d}s=L(T(\gamma))\leq2\pi.\qedhere
\]
\end{proof}

\begin{proof}[\textbf{Proof of the generalized Fary-Milnor theorem \ref{thm-farymilnor}}]

By \eqref{eq-2I} we need only to show that the tangent indicatrix has at least four intersections with each closed geodesic, i.e. $n(Y^\perp)\geq4$ for each $Y\in\mathbb{H}^2$.

Otherwise, suppose there exists a specific $Y\in\mathbb{H}^2$ such that the tangent indicatrix $T(\gamma)$ has exactly two intersections with $Y^\perp$. In other words, the height function $\langle \gamma,Y\rangle$ has exactly two critical points, one maximum and the other being the minimum, which separate the curve $\gamma$ into two arcs. Connecting the points on each arc with the same value of $\langle \gamma,Y\rangle$ by a line segment. This provides a disc with $\gamma$ as the boundary. This contradicts to the assumption that $\gamma$ is a nontrivial knot.
\end{proof}

\section{General results on the range of the total curvature}

After knowing the generalizations of those classical results in the last section, the reader might be curious about the value distribution of the total curvature for general closed strong spacelike curves.
In this section, we will show that there is neither positive lower bounds nor upper bounds in the general case.\\

It is easy to see that the infimum of the total curvature is $0$ for closed strong spacelike curves in $\mathbb{R}^3_1$. For concrete examples we can find such a $\gamma$ whose tangent indicatrix is symmetric with respect to the $Ox_1x_3$-plane and $Ox_2x_3$-plane which is a perturbation of a space quadrilateral formed by four lightlike line segments on $\mathbb{S}^2_1$.\\

For strong spacelike curve with index $I\geq2$, a naive guess is that the total curvature has an upper bound $2I\pi$. It turns out surprisingly that the total curvature can be arbitrary large if there is no further restriction. We realized this possibility only after noticing the following phenomena:
\begin{enumerate}
\item A line of latitude has total length $\geq2\pi$, and this length can be arbitrarily large;
\item When $I\ge 2$ for a closed spacelike curve on $\mathbb{S}^2_1$, it is not difficult to make it intersecting with every geodesic.
\end{enumerate}
A simple example with arbitrarily large total curvature and being strong spacelike, closed, with index $2$ is given below.

\begin{example}\label{ex-shell-1}
Let $\gamma(\theta)=(\cos\theta,\sin\theta,h(\theta)),
\theta\in[0,4\pi]$, where
\begin{eqnarray}
h(\theta)=
\begin{cases}
\epsilon\theta,  &\theta\in[0,\frac{\pi}{2}]\\
\frac{\pi}{2}\epsilon-\epsilon\cos\theta, &
\theta\in[\frac{\pi}{2},\frac{3}{2}\pi]\\
-\epsilon(\theta-2\pi), &\theta\in[\frac{3}{2}\pi,\frac{5}{2}\pi]
\\
-\frac{\pi}{2}\epsilon+\epsilon\cos\theta,&
\theta\in[\frac{5}{2}\pi,\frac{7}{2}\pi]\\
\epsilon(\theta-4\pi),&\theta\in[\frac{7}{2}\pi,4\pi]
\end{cases}
\end{eqnarray}
and $\epsilon\in(0,1)$.

\begin{figure}[!htb]
\centering\begin{tabular}{cc}
\begin{minipage}[t]{2in}
\includegraphics[width=2in]{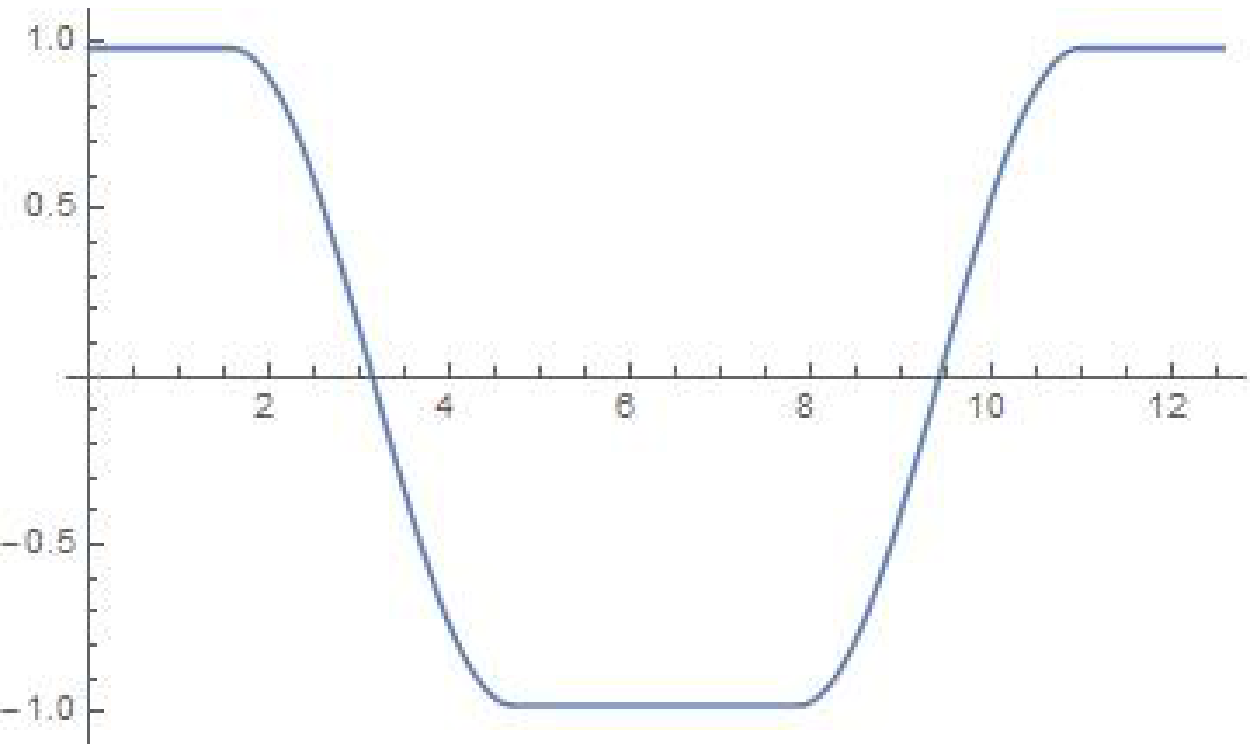}
\caption{Graph of $h'(\theta)$}
\end{minipage} 
\begin{minipage}[t]{2in}
\includegraphics[width=2in]{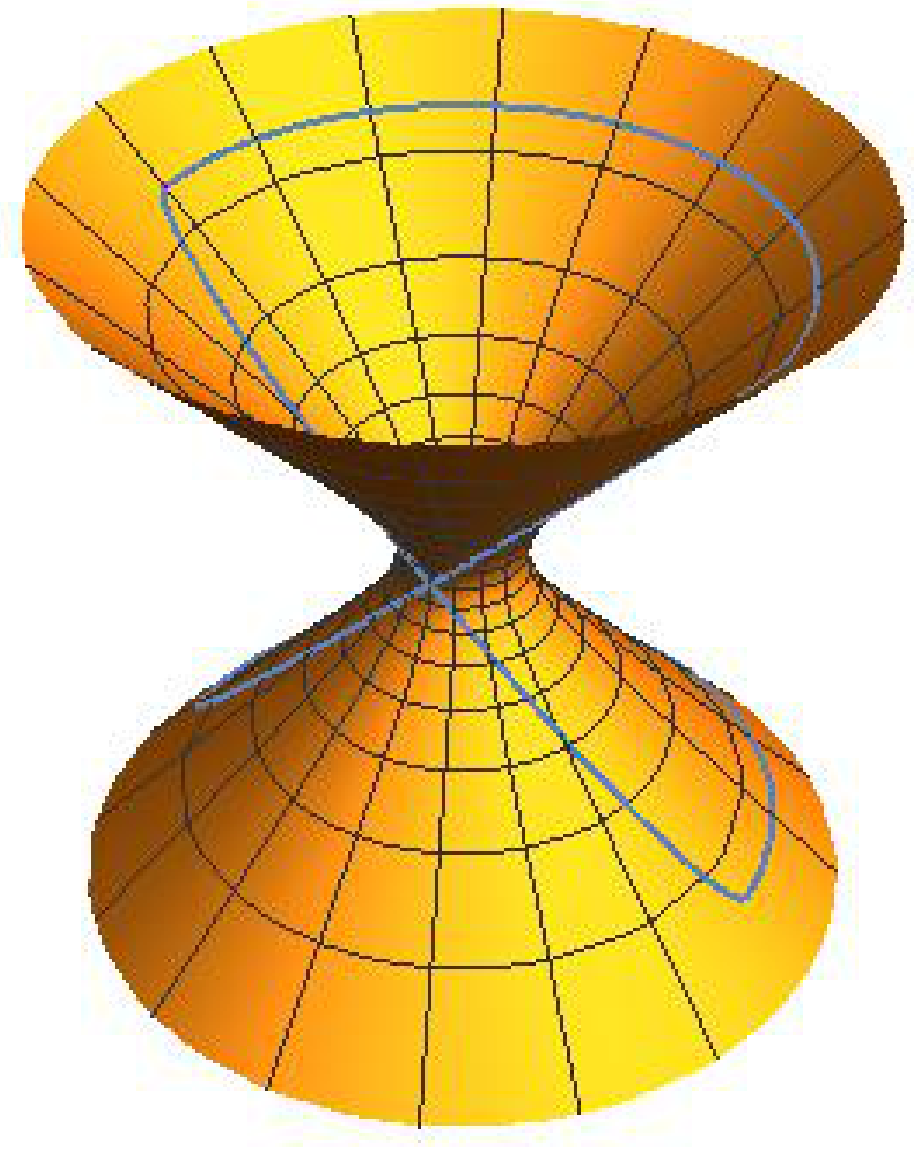}
\caption{The indicatrix $T(\gamma)$}
\end{minipage}
\end{tabular}
\end{figure}
Then
\begin{eqnarray}
h'(\theta)=
\begin{cases}
\pm\epsilon\sin\theta,&
\theta\in[\frac{\pi}{2},\frac{3}{2}\pi]\cup
[\frac{5}{2}\pi,\frac{7}{2}\pi],\\
\pm\epsilon,& \text{otherwise}.
\end{cases}
\end{eqnarray}
and
\begin{eqnarray}
h''(\theta)=
\begin{cases}
\pm\epsilon\cos\theta,&
\theta\in[\frac{\pi}{2},\frac{3}{2}\pi]\cup
[\frac{5}{2}\pi,\frac{7}{2}\pi],\\
0,& \text{otherwise}.
\end{cases}
\end{eqnarray}
So $h'(\theta)^2+h''(\theta)^2=\epsilon^2<1$ and $\gamma(\theta)$ is strong spacelike. It is closed with $I=2$ and total curvature
\[
\int_\gamma k\mathrm{d}s\ge 4\int_0^{\frac{\pi}{2}}
\frac{\sqrt{1-(h')^2-(h'')^2}}{1-(h')^2}d\theta =\frac{2\pi}{\sqrt{1-\epsilon^2}}.
\]
When $\epsilon\rightarrow1^-$, it is self-evident that $\int_\gamma k\mathrm{d}s$ has no upper bounds.
\end{example}

As a final remark, let us compare two concrete examples of closed curves of index $I=2$: a trefoil knot and a clam-shell curve as illustrated below.
\begin{figure}[!htb]
\centering\begin{tabular}{cc}
\begin{minipage}[t]{2in}
\includegraphics[width=1.5in]{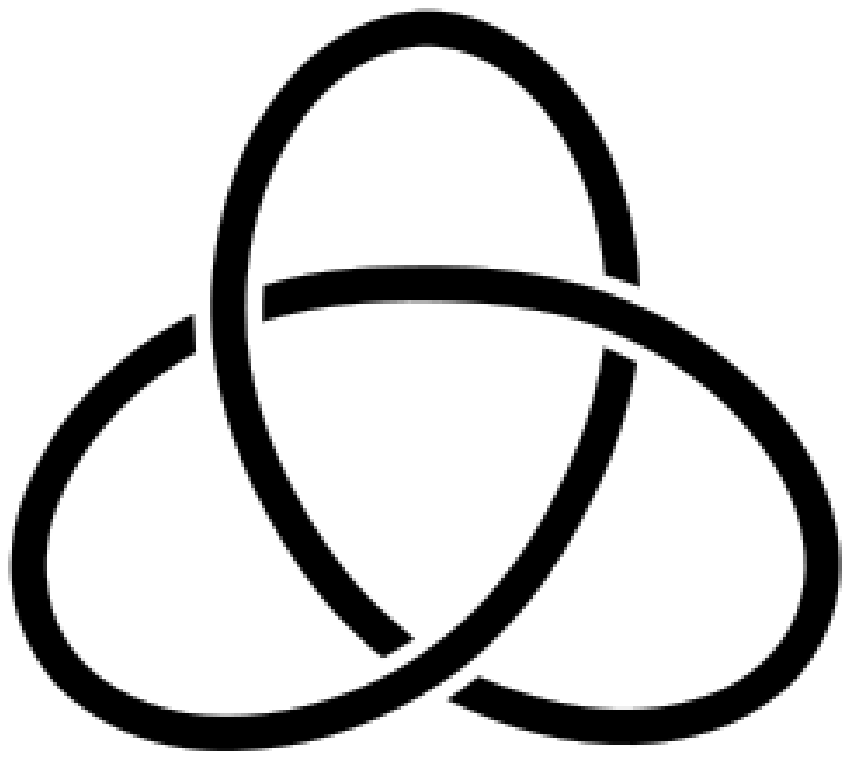}
\caption{Trefoil knot}
\end{minipage} 
\begin{minipage}[t]{2in}
\includegraphics[width=1.5in]{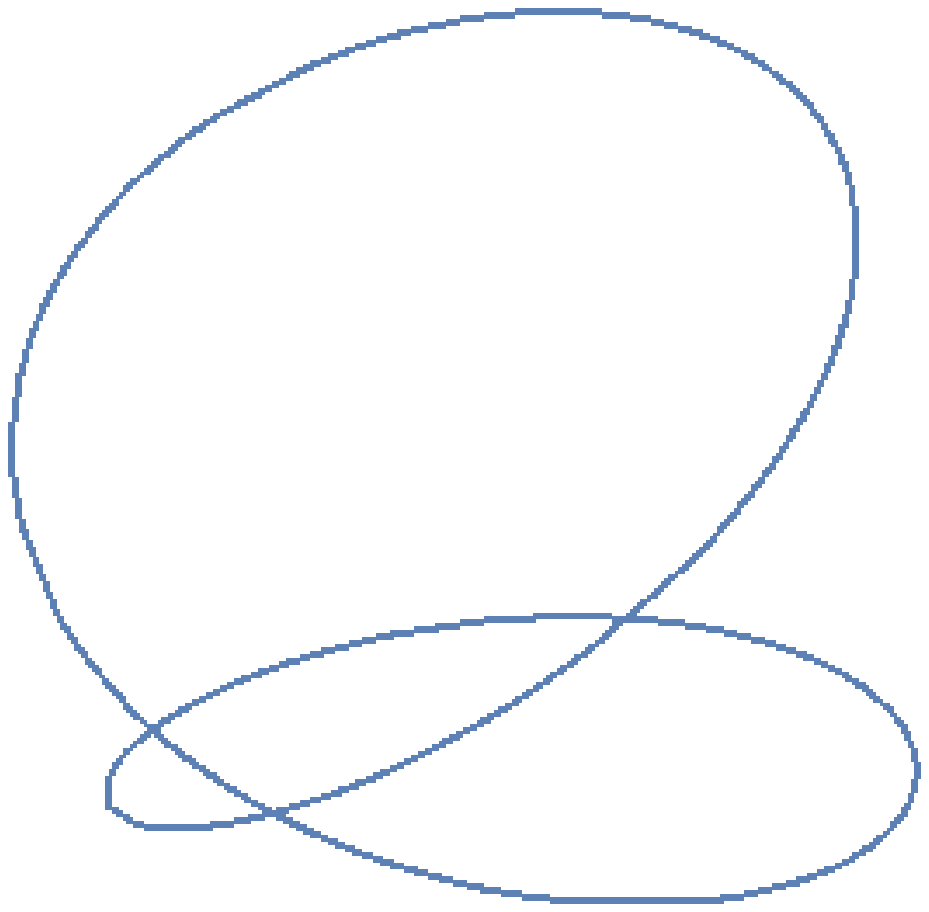}
\caption{clam-shell curve}
\end{minipage}
\end{tabular}
\end{figure}
Notice that a trefoil knot can be realized as a strong spacelike closed curve with $I=2$ in $\mathbb{R}^3_1$ (just imagine the trefoil knot diagram on the spacelike plane $Ox_1x_2$ and make a slight perturbation along the $x_3$ direction).
According to the generalized Fary-Milnor theorem \ref{thm-farymilnor}, it must have total curvature less than $4\pi$. Our explanation is that as a nontrivial knot, the tangent indicatrix of a nontrivial knot is restricted to be close to some spacelike plane which can not be perturbed quite freely.

On the other hand, a clam-shell curve in $\mathbb{R}^3_1$ like Example~\ref{ex-shell-1} can have large total curvature. It can be divided into two symmetric parts, both approximating a helix in their major parts; the tangent indicatrix is very close to a half latitude circle in the corresponding part whose length can be very long.

\end{document}